\newtheorem{theorem}{Theorem}[section]
\newtheorem{corollary}[theorem]{Corollary}
\theoremstyle{definition}
\newtheorem{definition}[theorem]{Definition}
\theoremstyle{remark}
\newtheorem{remark}[theorem]{Remark}
\numberwithin{equation}{section}
\newcommand{\Real}{{\mathbb R}}
\newcommand{\f}{\mathbf{f}}
\newcommand{\x}{\mathbf{x}}
\newcommand {\hide}[1]{}
\begin{document}
\title[Toric cubes are closed balls]
{Toric cubes are closed balls}
\author{Saugata Basu}
\address{Department of Mathematics,
Purdue University, West Lafayette, IN 47907, USA}
\email{sbasu@math.purdue.edu}
\author{Andrei Gabrielov}
\address{Department of Mathematics,
Purdue University, West Lafayette, IN 47907, USA}
\email{agabriel@math.purdue.edu}
\author{Nicolai Vorobjov}
\address{
Department of Computer Science, University of Bath, Bath
BA2 7AY, England, UK}
\email{nnv@cs.bath.ac.uk}
\thanks{The first author was supported in part by NSF grant CCF-0915954.
The second author was supported in part by NSF grants DMS-0801050 and DMS-1067886}

\begin{abstract}
We prove that toric cubes, 
which are images of $[0,1]^d$ under monomial maps, are the closures
of graphs of monotone
maps, and in particular
semi-algebraically homeomorphic to closed balls.
\end{abstract}
\maketitle

\section{Introduction}
In \cite{Sturmfelsetal2012} Engstr\"om, Hersh and Sturmfels introduced a class of compact
semi-algebraic sets which they call \emph{toric cubes}.

The following definition is adapted from \cite{Sturmfelsetal2012}.
\begin{definition}\label{def:toric}
Let $\mathcal{A} = \{\mathbf{a}_1,\ldots,\mathbf{a}_n\} \subset \mathbb{N}^d$,
and $f_{\mathcal{A}}: [0,1]^d \rightarrow [0,1]^n$ be the map
\[
\mathbf{t} = (t_1,\ldots,t_d) \mapsto (\mathbf{t}^{\mathbf{a}_1},\ldots,
\mathbf{t}^{\mathbf{a}_n}),
\]
where $\mathbf{t}^{\mathbf{a}_i}:= t_{1}^{a_{i,1}} \cdots t_{d}^{a_{i,d}}$ for
$\mathbf{a}_i=(a_{i,1}, \ldots ,a_{i,d})$.
The image of $f_\mathcal{A}$ is called a toric cube.

We call the image of the restriction of $f_\mathcal{A}$ to $(0,1)^d$ an \emph{open toric cube}.
The closure of an open toric cube is a toric cube.
Note that an open toric cube is not necessarily an open subset of $\Real^n$, and need not be
contained in  $(0,1)^n$ (if some $\mathbf{a}_i = \mathbf{0}$).
\end{definition}

In \cite{BGV,BGV2012} the authors introduced a certain
class of definable subsets of $\Real^n$ (called \emph{semi-monotone sets})
and definable maps $f: X \rightarrow \Real^k$ (called
\emph{monotone maps}), where $X \subset \Real^n$ is a semi-monotone set.
Here ``definable'' means ``definable in an o-minimal structure
over $\Real$'', for example, real semi-algebraic.

These objects are meant to serve as building blocks for obtaining
a conjectured cylindrical cell decomposition
of definable sets into topologically regular cells,
without changing the coordinate system in the ambient space $\Real^n$
(see \cite{BGV,BGV2012} for a more detailed motivation behind these
definitions).

The main result of this note is the following theorem.

\begin{theorem}
\label{thm:main}
An open toric cube $C \subset \Real^n$
is the graph of a monotone map.
\end{theorem}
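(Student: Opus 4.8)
The plan is to pass to logarithmic coordinates, where the monomial map $f_{\mathcal A}$ becomes linear. Let $A=(a_{i,j})$ be the $n\times d$ matrix with $i$-th row $\mathbf a_i$. On $(0,1)^d$ substitute $s_j=\log t_j\in(-\infty,0)$; then $\mathbf t^{\mathbf a_i}=\exp(\langle\mathbf a_i,\mathbf s\rangle)$, so the open toric cube $C$ is the coordinatewise exponential of the polyhedral convex cone $K_0:=A\cdot\Real^d_{<0}\subset\Real^n$. As the entries of $A$ are nonnegative, $K_0\subset\Real^n_{\le0}$, and the $i$-th coordinate vanishes identically on $K_0$ exactly when $\mathbf a_i=\mathbf 0$ --- precisely the coordinates on which $C$ is constantly $1$. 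These contribute only constant components, which may be appended to any monotone map, so I may assume every $\mathbf a_i\ne\mathbf 0$; then $K_0\subset\Real^n_{<0}$ and $C\subset(0,1)^n$.

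Next I would exhibit $C$ as a graph. Set $r=\operatorname{rank}A$ and choose $I\subset\{1,\dots,n\}$ with $|I|=r$ and $\{\mathbf a_i:i\in I\}$ linearly independent; for $j\notin I$ write $\mathbf a_j=\sum_{i\in I}\gamma_{ji}\mathbf a_i$ with $\gamma_{ji}\in\Real$. Pairing with $\mathbf s$ and exponentiating, every point of $C$ satisfies $x_j=\prod_{i\in I}x_i^{\gamma_{ji}}$ for $j\notin I$; hence $C$ is the graph, over $C'':=\pi_I(C)$, of the generalized monomial map $g(\mathbf y)=\bigl(\prod_{i\in I}y_i^{\gamma_{ji}}\bigr)_{j\notin I}$, where $\pi_I$ is the coordinate projection onto $I$. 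Since $\{\mathbf a_i:i\in I\}$ is linearly independent, $\mathbf s\mapsto(\langle\mathbf a_i,\mathbf s\rangle)_{i\in I}$ is surjective, so $K:=\{(\langle\mathbf a_i,\mathbf s\rangle)_{i\in I}:\mathbf s\in\Real^d_{<0}\}$ is a full-dimensional open polyhedral convex cone in $\Real^I\cong\Real^r$ contained in $\Real^r_{<0}$, and $C''=\exp(K)$ is the open toric cube of $\{\mathbf a_i:i\in I\}$.

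It then remains to show that $C''$ is semi-monotone and $g$ is monotone on it, after which Theorem \ref{thm:main} follows. The tool is the coordinatewise logarithm $\lambda\colon(0,1)^r\to\Real^r_{<0}$, a coordinate-preserving, coordinatewise strictly increasing definable homeomorphism, which carries $C''$ onto the convex cone $K$ and conjugates $g$ into the linear map $\mathbf s\mapsto\bigl(\sum_{i\in I}\gamma_{ji}s_i\bigr)_{j\notin I}$. I would then argue by induction on dimension using the coordinate-section characterizations of semi-monotone sets and monotone maps from \cite{BGV,BGV2012}: $C''$ is open, bounded and connected, and for every coordinate hyperplane $\{x_i=c\}$ and coordinate half-space $\{x_i<c\}$, $\{x_i>c\}$ the corresponding section of $C''$ is the $\lambda$-image of a convex section of $K$ --- a lower-dimensional convex cone, or a convex open set --- which by induction (the one-dimensional base case being an interval, since exponentials of intervals and rays are intervals) is again semi-monotone; the analogous verification for $C$ itself, equivalently for the relatively open convex cone $\tilde C:=A\cdot\Real^d_{<0}$, yields that $g$ is monotone. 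The inputs I would quote from \cite{BGV,BGV2012} are that bounded open convex definable sets are semi-monotone, that restrictions of affine maps to semi-monotone sets are monotone, and that both notions are stable under coordinatewise reparametrization by strictly increasing continuous definable functions.

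The step I expect to be the main obstacle is precisely this last one. The coordinatewise exponential is not affine and does not preserve convexity: even for linearly independent $\mathcal A$ the set $C$ need not be convex --- for instance $C=\{(x_1,x_2,x_3):0<x_1,x_2,x_3<1,\ x_1>x_2x_3\}$ is the open toric cube of $\mathcal A=\{(0,0,1,1),(1,0,1,0),(0,1,0,1)\}$. So the point is that the weaker, purely coordinate- and order-theoretic properties defining semi-monotone sets and monotone maps, rather than convexity, survive the exponential change of variables; checking this requires working with the characterizations of \cite{BGV,BGV2012} directly. A secondary technical point is that $K$ is unbounded whereas semi-monotone sets are by definition bounded --- the exponential being exactly what compactifies $K$ to the bounded set $C''$ --- which one handles either by exhausting $K$ by the bounded convex sets $K\cap(-M,0)^r$ and passing to the definable limit, or by verifying the coordinate-section criterion for $C''$ directly, never asserting semi-monotonicity of $K$ itself.
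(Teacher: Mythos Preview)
Your core idea---pass to componentwise logarithms so that $f_{\mathcal A}$ becomes the linear map $\mathbf z\mapsto(\mathbf a_1\cdot\mathbf z,\ldots,\mathbf a_n\cdot\mathbf z)$ and the open toric cube becomes a relatively open convex set---is exactly the idea of the paper. Where you diverge is in how you exploit it. You propose to pick a maximal independent subfamily $I$, exhibit $C$ explicitly as the graph of a generalized monomial map $g$ over the projection $C''=\pi_I(C)$, and then verify separately that $C''$ is semi-monotone and $g$ is monotone, by induction on dimension and by invoking stability of both notions under coordinatewise strictly increasing reparametrizations. This works, but the paper's proof is considerably shorter because it applies Theorem~\ref{th:def_monotone_map} directly to $C$ inside $\Real^n$, without ever choosing $I$ or naming $g$. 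Since the componentwise logarithm is a homeomorphism that sends affine coordinate subspaces of $(0,1)^n$ to affine coordinate subspaces of $(-\infty,0)^n$, every intersection $C\cap S$ is the $\exp$-image of the intersection of the convex set $K_0=A\cdot\Real^d_{<0}$ with an affine subspace, hence connected; and fibers of coordinate projections of $C$ are likewise $\exp$-images of affine slices of $K_0$, so they are singletons precisely when zero-dimensional, giving quasi-affineness. That is the whole argument: the ``stability under coordinatewise reparametrization'' lemma you plan to quote is effectively proved inline, and the unboundedness of $K_0$ never enters because one checks condition~(ii) of Theorem~\ref{th:def_monotone_map} on the bounded side, for $C$ itself. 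Your detour through an explicit base-and-map decomposition and an inductive scheme is correct but unnecessary.
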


As a result we obtain
\begin{corollary}
\label{cor:main}
An open toric cube $C \subset [0,1]^n$, with $\dim (C) = k$,
is semi-algebraically homeomorphic to a standard open ball.
The pair $(\overline{C},C)$ is semi-algebraically homeomorphic to the pair
$([0,1]^{k},(0,1)^{k})$, in particular, a toric cube is semi-algebraically homeomorphic to
a standard closed ball.
\end{corollary}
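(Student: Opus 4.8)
The plan is to derive the Corollary from Theorem~\ref{thm:main} together with the structure theory of semi-monotone sets and monotone maps from \cite{BGV,BGV2012}. By Theorem~\ref{thm:main}, after a permutation of the coordinates we may write $\Real^n=\Real^k\times\Real^{n-k}$ so that $C$ is the graph of a monotone map $g\colon X\to\Real^{n-k}$, with $X\subset\Real^k$ semi-monotone; the coordinate projection $\pi\colon\Real^n\to\Real^k$ onto the first factor restricts to a bijection $C\to X$ whose inverse is the graph map $x\mapsto(x,g(x))$, so this restriction is a semi-algebraic homeomorphism and in particular $\dim X=\dim C=k$.

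The next step is to pass to the closures. By \cite{BGV2012}, a monotone map extends to a continuous semi-algebraic map $\bar g\colon\overline X\to\Real^{n-k}$, and I would then check that $\overline C$ is exactly the graph of $\bar g$ over $\overline X$: that graph is the continuous image of the compact set $\overline X$, hence compact and closed, and it contains $C$, so it contains $\overline C$; conversely every point $(x,\bar g(x))$ of it with $x\in\overline X$ is a limit of points $(x_m,g(x_m))\in C$ with $x_m\to x$, so it lies in $\overline C$. Hence $\pi$ restricts to a semi-algebraic homeomorphism $\overline C\to\overline X$ carrying $C$ onto $X$; equivalently, the pair $(\overline C,C)$ is semi-algebraically homeomorphic to the pair $(\overline X,X)$.

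The final step is to quote the structure theorem for semi-monotone sets from \cite{BGV,BGV2012}: for a semi-monotone set $X\subset\Real^k$ (necessarily open and $k$-dimensional) the pair $(\overline X,X)$ is semi-algebraically homeomorphic to $([0,1]^k,(0,1)^k)$ --- a semi-monotone set is semi-algebraically homeomorphic to a standard open ball, with the homeomorphism extending to the closures and taking the frontier onto the boundary sphere. Composing with the homeomorphism of the previous step gives $(\overline C,C)\cong([0,1]^k,(0,1)^k)$ semi-algebraically. In particular $C$ is semi-algebraically homeomorphic to the standard open ball $(0,1)^k$, and, since a toric cube is by definition the closure $\overline C$ of an open toric cube $C$, it is semi-algebraically homeomorphic to the standard closed ball $[0,1]^k$.

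Given Theorem~\ref{thm:main} and the quoted results there is no serious obstacle; the one point requiring care is the identification $\overline C=\operatorname{graph}(\bar g)$, i.e.\ that for a monotone map the operations ``take the closure'' and ``take the graph'' commute, which is precisely where the continuous extendability of $g$ to $\overline X$ is used. For orientation, I note that in the logarithmic coordinates $v_i=-\log y_i$ an open toric cube becomes a relatively open convex polyhedral cone inside the column span of the exponent matrix, which makes transparent the semi-monotonicity underlying Theorem~\ref{thm:main} (every coordinate slice of a convex set is convex, hence connected); but reading off the boundary structure of $\overline C$ from this picture requires analysing the cone at infinity, so for the pair statement it is cleaner to invoke the semi-monotone-set structure theorem directly.
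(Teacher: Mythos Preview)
There is a genuine gap: the assertion that a monotone map $g\colon X\to\Real^{n-k}$ extends to a continuous map $\bar g\colon\overline X\to\Real^{n-k}$ is false in general, and it is not a result of \cite{BGV2012}. A counterexample occurs already among toric cubes. Let $C$ be the image of $(0,1)^2$ under $(s,t)\mapsto(s,st,t)$. Projecting to the first two coordinates exhibits $C$ as the graph of $g(x_1,x_2)=x_2/x_1$ over the semi-monotone triangle $X=\{0<x_2<x_1<1\}$, and one checks directly from Theorem~\ref{th:def_monotone_map} that this $g$ is monotone. But $g$ has no continuous extension to $(0,0)\in\overline X$; indeed $\overline C$ contains the entire segment $\{0\}\times\{0\}\times[0,1]$ over that point, so $\overline C$ is not the graph of any map on $\overline X$, and the projection $\pi$ does \emph{not} give a homeomorphism of pairs $(\overline C,C)\cong(\overline X,X)$.

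The paper's proof sidesteps this by invoking the full strength of Theorem~\ref{th:regularcell} (Theorem~5.1 of \cite{BGV2012}): the graph of a monotone map is itself a regular cell, so $(\overline C,C)\cong([0,1]^k,(0,1)^k)$ follows immediately from Theorem~\ref{thm:main}, with no reduction to the base $X$ and no extension of $g$. You quote only the special case of that theorem for semi-monotone sets (equivalently, constant maps) and then try to bridge to the general case via the extension claim---but that bridge is precisely what the general theorem supplies. Your approach could be salvaged for toric cubes by choosing the projection more carefully (in the example above, projecting to $(x_1,x_3)$ yields the extendable map $(x_1,x_3)\mapsto x_1x_3$ on $(0,1)^2$), but as written you neither make nor justify such a choice.
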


\begin{remark}
Note that the first statement in Corollary~\ref{cor:main} is also proved in \cite[Proposition 1]
{Sturmfelsetal2012}.
In conjunction with Theorem~2 in \cite{Sturmfelsetal2012},
Corollary~\ref{cor:main} implies that any CW-complex in which the closures of each cell is a toric cube,
must be a regular cell complex, and this answers in the affirmative the
Conjecture~1 in \cite{Sturmfelsetal2012}.
\end{remark}

\section{Proof of Theorem \ref{thm:main} and Corollary \ref{cor:main}}
We begin with a few preliminary definitions.

\begin{definition}
\label{def:semi-monotone}
Let $L_{j, \sigma, c}:= \{ \x=(x_1, \ldots ,x_n) \in \Real^n|\> x_j \sigma c \}$
for $j=1, \ldots ,n$, $\sigma \in \{ <,=,> \}$, and $c \in \Real$.
Each intersection of the kind
$$C:=L_{j_1, \sigma_1, c_1} \cap \cdots \cap L_{j_m, \sigma_m, c_m} \subset \Real^n,$$
where $m=0, \ldots ,n$, $1 \le j_1 < \cdots < j_m \le n$, $\sigma_1, \ldots ,\sigma_m \in \{<,=,> \}$,
and $c_1, \ldots ,c_m \in \Real$, is called a {\em coordinate cone} in $\Real^n$.

Each intersection of the kind
$$S:=L_{j_1, =, c_1} \cap \cdots \cap L_{j_m, =, c_m} \subset \Real^n,$$
where $m=0, \ldots ,n$, $1 \le j_1 < \cdots < j_m \le n$,
and $c_1, \ldots ,c_m \in \Real$, is called an {\em affine coordinate subspace} in $\Real^n$.

In particular, the space $\Real^n$ itself is both a coordinate cone and an affine coordinate
subspace in $\Real^n$.
\end{definition}

\begin{definition}[\cite{BGV}]
\label{def:set}
An open (possibly, empty) bounded set $X \subset \Real^n$ is called {\em semi-monotone} if
for each coordinate cone $C$  the intersection $X \cap C$ is connected.
\end{definition}

\begin{remark}
In fact, in Definition \ref{def:set} above, it suffices to consider
intersections with only affine
coordinate subspaces
(see \cite[Theorem~4.3]{BGV2012} or Theorem \ref{th:def_monotone_map} below).
\end{remark}

Notice that any convex open subset of $\Real^n$ is semi-monotone.

The definition of \emph{monotone maps} is given in \cite{BGV2012} and is a bit more technical.
We will not repeat it here but recall a few important properties of monotone maps that we will need.
In particular, Theorem~\ref{th:def_monotone_map} below, which appears in \cite{BGV2012},
gives a complete characterization of monotone maps.
For the purposes
of the present paper this characterization can be taken as the definition of monotone maps.

\begin{definition}
[\cite{BGV2012}, Definition~ 1.4]
\label{def:quasi-affine}
Let a bounded continuous map $\f=(f_1, \ldots ,f_k)$ defined on an open bounded non-empty set
$X \subset \Real^n$ have the graph ${\bf F} \subset \Real^{n+k}$.
We say that $\f$ is {\em quasi-affine} if for any coordinate subspace
$T$ of $\Real^{n+k}$, the projection $\rho_T:\> {\bf F} \to T$ is injective if and only if the image
$\rho_T({\bf F})$ is $n$-dimensional.
\end{definition}

The following 
theorem is
proved in \cite{BGV2012}.

\begin{theorem}[\cite{BGV2012}, Theorem~4.3]
\label{th:def_monotone_map}
Let a bounded continuous quasi-affine map $\f=(f_1, \ldots ,f_k)$ defined on an open bounded
non-empty set $X \subset \Real^n$ have the graph ${\bf F} \subset \Real^{n+k}$.
The following three statements are equivalent.

\begin{enumerate}
\item[(i)]
The map $\f$ is monotone.
\item[(ii)]
For each affine coordinate subspace $S$ in $\Real^{n+k}$ the intersection ${\bf F} \cap S$ is connected.
\item[(iii)]
For each coordinate cone $C$ in $\Real^{n+k}$ the intersection ${\bf F} \cap C$ is connected.
\end{enumerate}
\end{theorem}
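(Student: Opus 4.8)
The plan is to prove the cycle of implications (i)$\Rightarrow$(iii)$\Rightarrow$(ii)$\Rightarrow$(i). The implication (iii)$\Rightarrow$(ii) is immediate, since an affine coordinate subspace $S\subset\Real^{n+k}$ is the coordinate cone in which every $\sigma_i$ equals ``$=$'', so (ii) is literally the no-strict-inequality case of (iii). Everything else is an induction on $n+k$, driven by passing from $\mathbf F$ to its section $\mathbf F\cap\{x_j=c\}$ by a single coordinate hyperplane: because $\mathbf f$ is quasi-affine, such a section, after projection to an appropriate coordinate subspace onto which it maps injectively, is again the graph of a bounded continuous quasi-affine map of strictly smaller $n+k$ (the base dimension dropping from $n$ to $n-1$); that $X$ is open — so its coordinate-hyperplane slices are open in those hyperplanes — is used here, the case $j>n$ (slicing by a level set of some $f_i$) being the more delicate one and the reason the quasi-affine ``change of base coordinates'' is needed. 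The base of the outer induction is $k=0$: then $\mathbf f$ is empty, $\mathbf F=X$, ``monotone'' means ``$X$ semi-monotone'' (Definition~\ref{def:set}), (i)$\Leftrightarrow$(iii) is by definition, and (ii)$\Leftrightarrow$(iii) is the cone-reduction statement proved in the last two paragraphs.

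For (i)$\Leftrightarrow$(iii) I would match the two sides with the recursive definition of monotone map from~\cite{BGV2012}, which requires, beyond bounded continuity and quasi-affineness, that $X$ be semi-monotone and that certain coordinate-hyperplane sections $\mathbf F\cap\{x_j=c\}$ be themselves graphs of monotone maps. Given (iii), every such section inherits ``all coordinate-cone sections connected'' (a cone of the subspace $\{x_j=c\}$ is the trace of a cone of $\Real^{n+k}$), hence by the inductive hypothesis is the graph of a monotone map; moreover $X$ is semi-monotone, because $X$ intersected with a coordinate cone of $\Real^n$ is the projection to the base of $\mathbf F$ intersected with the corresponding coordinate cone of $\Real^{n+k}$, which is connected by (iii); so $\mathbf f$ is monotone. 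Conversely, if $\mathbf f$ is monotone, each hyperplane section is the graph of a monotone map, so by the inductive hypothesis has all coordinate-cone sections connected, and $\mathbf F$ itself is connected; a general coordinate cone $C$ that fixes some coordinate $x_j=c$ reduces, intersected with $\mathbf F$, to a coordinate-cone section of $\mathbf F\cap\{x_j=c\}$, while a $C$ that constrains a coordinate strictly is handled by the cone-reduction lemma below; this gives (i)$\Rightarrow$(iii).

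The heart of the argument is that cone-reduction lemma, which with (ii) in place of ``goodness of the hyperplane sections'' gives (ii)$\Rightarrow$(iii): assuming that the graph of a bounded continuous quasi-affine map has all of its coordinate-hyperplane sections ``good'' (all their coordinate-cone sections connected) and is itself connected, show that $\mathbf F\cap C$ is connected for every coordinate cone $C=L_{j_1,\sigma_1,c_1}\cap\dots\cap L_{j_m,\sigma_m,c_m}$. I would induct on the number $r$ of indices $i$ with $\sigma_i\in\{<,>\}$. If $r=0$ then $C$ is an affine coordinate subspace and $\mathbf F\cap C$ is either $\mathbf F$ itself (connected) or a hyperplane section (good). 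If $r\ge 1$, say $\sigma_1$ is ``$>$'', write $C=\{x_{j_1}>c_1\}\cap C'$ and decompose $\mathbf F\cap C=\bigcup_{t>c_1}\bigl(\mathbf F\cap\{x_{j_1}=t\}\cap C'\bigr)$. For each $t$ with nonempty slice, $\mathbf F\cap\{x_{j_1}=t\}$ is good (in the degenerate case $x_{j_1}\equiv\mathrm{const}$ on $\mathbf F$ all but one slice is empty and we are done), and $\mathbf F\cap\{x_{j_1}=t\}\cap C'$ is a coordinate-cone section of it with only $r-1$ strict inequalities, hence connected by the inner inductive hypothesis. It remains to glue the connected slices.

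The gluing is the main obstacle, and the only point where quasi-affineness does essential work — for a general bounded open set all of whose affine-subspace sections are connected (a planar annulus, say) the cone sections need not be connected. One first checks that the parameter set $\{t>c_1:\mathbf F\cap\{x_{j_1}=t\}\cap C'\neq\emptyset\}$ is an interval, since it equals the image under the coordinate $x_{j_1}$ of $\mathbf F\cap C'$, and $\mathbf F\cap C'$ is connected by the inductive hypothesis applied to the cone $C'$ (which has $r-1$ strict inequalities). Over this interval we have a family of connected slices whose union we must see is connected. I would establish this by a limiting argument using boundedness of $\mathbf F$: the fibres of the continuous coordinate $x_{j_1}$ restricted to the compact set $\overline{\mathbf F}$ behave semicontinuously, so for $t'$ close to $t$ the slices at $t$ and $t'$ have closures that meet inside $\mathbf F\cap C$, and hence any two points of $\mathbf F\cap C$ can be joined through finitely many slices. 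An alternative, if that step is awkward to make precise, is to use quasi-affineness to re-present $\mathbf F$ as a graph over an $n$-dimensional coordinate subspace adapted to the constrained coordinates of $C$, turning $\mathbf F\cap C$ into the restriction of a quasi-affine graph over a semi-monotone-type base and reducing the gluing to a lower-dimensional instance of the theorem. Either way, this connectivity-of-the-union step is the crux on which the whole proof rests.
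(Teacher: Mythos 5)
You should first be aware that the paper you are writing into contains no proof of Theorem~\ref{th:def_monotone_map} at all: the statement is imported verbatim from \cite{BGV2012} (Theorem~4.3 there), where it is proved by a substantial induction developed jointly with the surrounding theory of semi-monotone sets and monotone maps; the present note uses it as a black box and deliberately does not even reproduce the recursive definition of ``monotone map''. So there is no in-paper proof to compare against, and your reconstruction has to stand on its own — and it does not, because of a genuine gap at precisely the point you flag as the crux.

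The gap is the gluing step: the claim that the union over $t$ of the connected slices $\mathbf{F}\cap\{x_{j_1}=t\}\cap C'$ is connected. Your limiting argument does not establish it. Boundedness of $\mathbf{F}$ and semicontinuity of the fibres of $x_{j_1}$ on the compact set $\overline{\mathbf{F}}$ only force closures of nearby slices to meet inside $\overline{\mathbf{F}}$, and the meeting point can lie in the frontier $\overline{\mathbf{F}}\setminus\mathbf{F}$ or on $\{x_{j_1}=c_1\}$, i.e.\ outside $\mathbf{F}\cap C$, so no chain of slices inside $\mathbf{F}\cap C$ is produced. Moreover, as your own annulus remark shows, the statement cannot follow from the connectivity hypotheses alone — quasi-affineness must do essential work — yet quasi-affineness never actually enters your limiting argument. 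Your fallback (re-presenting $\mathbf{F}$ as a graph over a coordinate subspace adapted to the constrained coordinates and ``reducing to a lower-dimensional instance'') is indeed close to the strategy of \cite{BGV2012}, but as written it is a pointer rather than a proof: to run it one needs that coordinate sections $\mathbf{F}\cap\{x_j\,\sigma\,c\}$ of graphs of monotone maps are again graphs of monotone maps and that coordinate projections of such graphs are semi-monotone sets or graphs of monotone maps (the statements quoted as Corollary~4.4 and Theorem~4.6 in \cite{BGV2012}), and in that paper these are established in a simultaneous induction with Theorem~4.3 itself, so they are not available as free inputs. A secondary, related weakness is that your verification of (i)$\Leftrightarrow$(iii) is carried out against an unstated recursive definition, and the claim that a section by $\{f_i=c\}$ becomes, after a change of base coordinates, a quasi-affine graph over an $(n-1)$-dimensional base is asserted rather than proved — that is exactly where injectivity of a projection has to be traded for dimension via quasi-affineness. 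Until the gluing step is proved with quasi-affineness doing identifiable work, the proposal is an outline (the easy (iii)$\Rightarrow$(ii) plus bookkeeping) wrapped around an unproved core.
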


\hide{
\begin{corollary}[\cite{BGV2012}, Corollary~4.4]\label{cor:def_monotone_map}
Let $\f:\> X \to \Real^k$ be a monotone map having the graph ${\bf F} \subset \Real^{n+k}$.
Then for every coordinate $z$ in $\Real^{n+k}$ and every $c \in \Real$,
each of the intersections ${\bf F} \cap \{ z\> \sigma\> c \}$, where $\sigma \in \{ <,>,= \}$,
is either empty or the graph of a monotone map.
\end{corollary}

\begin{theorem}[\cite{BGV2012}, Theorem~4.6]\label{th:proj}
Let $\f:\> X \to \Real^k$ be a monotone map defined on a semi-monotone set $X \subset \Real^n$ and
having the graph ${\bf F} \subset \Real^{n+k}$.
Then for any coordinate subspace $T$ in $\Real^{n+k}$
the image $\rho_T ({\bf F})$ under the projection map $\rho_T:\> {\bf F} \to T$
is either a semi-monotone set or the graph of a monotone map.
\end{theorem}
}

\begin{remark}
In view of Theorem~\ref{th:def_monotone_map}, it is natural to identify any semi-monotone set
$X \subset \Real^n$ with the graph of an identically constant function $f \equiv c$ on $X$,
where $c$ is an arbitrary real.
\end{remark}

\begin{definition}
A definable bounded open set $U \subset \Real^n$ is called (topologically) 
regular 
cell
if $\overline U$ is definably homeomorphic to a closed ball,
and the frontier $\overline U \setminus U$ is definably homeomorphic
$(n-1)$-sphere.
In other words, the pair $(\overline{U},U)$ is definably homeomorphic
to the pair $([0,1]^n,(0,1)^n)$.
\end{definition}

\begin{theorem}[\cite{BGV2012}, Theorem~5.1]
\label{th:regularcell}
The graph ${\bf F} \subset \Real^{n+k}$ of a monotone map $\f: X \to \Real^k$ on a semi-monotone set
$X \subset \Real^n$ is
definably homeomorphic to a regular 
cell.
\end{theorem}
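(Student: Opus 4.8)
The plan is to argue by induction on $n$, where $X\subseteq\Real^n$ is open, bounded and nonempty (so $\dim\mathbf{F}=\dim X=n$).

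For $n=0$ the graph $\mathbf{F}$ is a single point and there is nothing to prove. For $n=1$ the set $X\subset\Real$, being a connected bounded open semi-monotone set, is a single open interval; by Theorem~\ref{th:def_monotone_map} every level set $\{f_i=c\}$ of every component $f_i$ of $\f$ is connected, so each $f_i$ is a classically monotone function on that interval and thus has one-sided limits at both endpoints. Hence $\f$ extends continuously to $\overline X$, the closure $\overline{\mathbf{F}}$ is the graph of the extension, and $(\overline{\mathbf{F}},\mathbf{F})$ is definably homeomorphic to $([0,1],(0,1))$.

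For the inductive step ($n\ge 2$) I would fix a coordinate, say $x_n$ after relabelling; here the choice of which $n$ of the $n+k$ coordinates of $\Real^{n+k}$ are regarded as the independent variables of the monotone map may first have to be changed, which is legitimate because the quasi-affine condition and the characterization of monotonicity in Theorem~\ref{th:def_monotone_map} are symmetric in all $n+k$ coordinates. Let $p$ denote the projection forgetting $x_n$ and put $X'=p(X)\subset\Real^{n-1}$. Following connectedness of coordinate-cone sections through $p$, one checks that $X'$ is a bounded open semi-monotone set with $\dim X'=n-1$, so the inductive hypothesis (its $k=0$ instance) shows $\overline{X'}$ is a regular $(n-1)$-cell. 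Consider the map $\overline{\mathbf{F}}\to\overline{X'}$ sending a limit of points $(x,\f(x))$ to $p(x)$. For $x'\in X'$ the section $X\cap p^{-1}(x')$ is the intersection of $X$ with an affine coordinate subspace, hence a nonempty open interval by Theorem~\ref{th:def_monotone_map}(ii), and the corresponding section of $\mathbf{F}$ is a single arc; passing to closures, one shows that every fibre of $\overline{\mathbf{F}}\to\overline{X'}$ is a single closed interval, nondegenerate over $X'$ and degenerating over the frontier $\overline{X'}\setminus X'$ exactly where $X$ pinches. Thus $\overline{\mathbf{F}}$ is a ``curvilinear prism'' over the regular $(n-1)$-cell $\overline{X'}$, and the task is to straighten it into $[0,1]^{n-1}\times[0,1]=[0,1]^{n}$, carrying $\mathbf{F}$ onto $(0,1)^n$.

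The straightening is, I expect, the main obstacle and the true content of the proof. The difficulty is that the two functions bounding the prism need not be continuous on the base: already for a plain semi-monotone set --- say an L-shaped planar region --- the width in the $x_n$-direction jumps, so the prism is a quotient of a genuine product rather than a product, and the bounding functions may in addition coincide over parts of the interior of the base as well as over its frontier. To control this I would cut $\mathbf{F}$ by finitely many coordinate hyperplanes $\{x_j=c\}$ --- each of the three pieces $\mathbf{F}\cap\{x_j<c\}$, $\mathbf{F}\cap\{x_j=c\}$, $\mathbf{F}\cap\{x_j>c\}$ is again the graph of a monotone map, on a semi-monotone set --- and add to the induction a parameter measuring the number of pieces of a compatible definable decomposition, reducing to prisms with continuous bounding functions; such a prism straightens by an explicit definable homeomorphism, and one then reassembles $\overline{\mathbf{F}}$ by gluing. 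The gluing rests on two elementary facts: a union of two regular $n$-cells meeting along a common regular $(n-1)$-cell in their frontiers is again a regular $n$-cell, and collapsing the fibres of a trivial interval bundle over the boundary sphere of a closed ball yields a closed ball. In all of this the connectedness axioms of Theorem~\ref{th:def_monotone_map}, which constrain every coordinate slice and every coordinate projection of $\mathbf{F}$, are precisely the input that keeps the closure analysis --- in particular the identification of the frontier $\overline{\mathbf{F}}\setminus\mathbf{F}$ as a sphere --- under control.
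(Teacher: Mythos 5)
You are attempting to prove a statement that this note does not prove at all: Theorem~\ref{th:regularcell} is quoted from \cite{BGV2012}, where (together with the companion paper \cite{BGV}) it is the main technical result, established by a lengthy induction resting on a detailed study of the sets ${\bf F}\cap\{x_j\,\sigma\,c\}$, of coordinate projections of ${\bf F}$, and of the frontier of ${\bf F}$. Judged on its own, your sketch sets up the easy part correctly: $X'=p(X)$ is indeed semi-monotone (for a coordinate cone $C'\subset\Real^{n-1}$ one has $p(X)\cap C'=p\bigl(X\cap p^{-1}(C')\bigr)$, which is connected), the fibres over $X'$ are open intervals, and you rightly identify the discontinuity of the bounding functions of the resulting ``prism'' as the crux. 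But your remedy for that crux fails. Cutting by finitely many coordinate hyperplanes cannot, in general, produce pieces over which the top and bottom functions are continuous, because the discontinuity locus need not lie in any finite union of coordinate hyperplanes and the jump interval can vary from point to point. For example, take $X=\{(x,y,z)\in(0,1)^3:\ 0<z<h(x,y)\}$ with $h(x,y)=y$ for $y\le x$ and $h(x,y)=1$ for $y>x$. One checks that $X$ is semi-monotone, while its top function jumps along the diagonal $y=x$ by the amount $1-y$; since the jump interval $(y,1)$ sweeps a continuum of heights, every piece cut out by finitely many conditions $x\,\sigma\,a$, $y\,\sigma\,b$, $z\,\sigma\,c$ whose base still meets the diagonal in a nondegenerate segment inherits a discontinuous top function. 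So the induction ``on the number of pieces of a compatible decomposition'' never gets started; and if you instead cut along non-coordinate hypersurfaces (e.g.\ a general definable cell decomposition, which would restore continuity), the pieces are no longer graphs of monotone maps on semi-monotone sets, the inductive hypothesis does not apply to them, and proving that the reassembled union is a regular cell is exactly the unproved content.

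A secondary but genuine gap: your assertion that every fibre of $\overline{\bf F}\to\overline{X'}$ is a single closed interval is a statement about the closure, whereas Theorem~\ref{th:def_monotone_map} constrains only ${\bf F}$ itself; in general $\overline{\bf F}\cap S$ is strictly larger than the closure of ${\bf F}\cap S$, so connectedness of fibres over frontier points does not follow, and in \cite{BGV2012} the structure of the frontier is a separate and delicate part of the argument. The base case $n=1$ and the two gluing facts you invoke are fine, but as written the proposal reproduces the routine reductions and leaves the actual substance of the theorem --- taming the discontinuous walls of the prism and controlling the frontier --- without proof.
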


\begin{proof}[Proof of Theorem \ref{thm:main}]
Let $C \subset [0,1]^n$ be an open toric cube and suppose that $C=f_\mathcal{A}((0,1)^d)$ for
a monomial map $f_\mathcal{A}$ (see Definition~\ref{def:toric}).

Make the coordinate change $z_i = \log(t_i)$ for every $i=1, \ldots, d$, and take the logarithm
of every component of the map $f_\mathcal{A}$ expressed in coordinates $z_i$.
Denote the resulting map by $\log f_\mathcal{A}$.
Then $\log f_\mathcal{A}$ is the restriction of a linear map, namely
\[
\log f_\mathcal{A}: (-\infty,0)^d \rightarrow (-\infty,0)^n,
\]
defined by
\[
\mathbf{z} = (z_1,\ldots,z_d) \mapsto (\mathbf{a}_1\cdot\mathbf{z},
\ldots,\mathbf{a}_n\cdot\mathbf{z}).
\]

Observe that $\log$ (the component-wise logarithm) maps the open cube, $(0,1)^d$ (resp.
$(0,1)^n$) homeomorphically onto $(-\infty,0)^d$ (resp. $(-\infty,0)^n$).
It follows  that the fiber of the orthogonal projection of $C$ to any $k$-dimensional
coordinate subspace is the pre-image under the $\log$ map
of an affine subset of $(-\infty,0)^n$, and is a single point if it is zero-dimensional.
Hence $C$ is a graph of a quasi-affine map (choose any set of $k$ coordinates such that
the image of $C$ under the orthogonal projection to the coordinate subspace of those
coordinates is full dimensional).

Similarly, the intersection of $C$ with any affine coordinate subspace is the
pre-image under the $\log$ map, of an affine subset of $(-\infty,0)^n$ and hence connected.

We proved that $C$ satisfies the conditions of Theorem~\ref{th:def_monotone_map}, hence
$C$ is the graph of a monotone map.
\end{proof}

\begin{proof}[Proof of Corollary \ref{cor:main}]
Immediate consequence of Theorem \ref{thm:main} and
Theorem \ref{th:regularcell}.
\end{proof}

\bibliographystyle{plain}
\bibliography{master}

\def\cprime{$'$}
\begin{thebibliography}{1}

\bibitem{BGV}
Saugata Basu, Andrei Gabrielov, and Nicolai Vorobjov.
\newblock Semi-monotone sets.
\newblock {\em J. Eur. Math. Soc. (JEMS)}, to appear, 2011.

\bibitem{BGV2012}
Saugata Basu, Andrei Gabrielov, and Nicolai Vorobjov.
\newblock Monotone functions and maps.
\newblock {\em arXiv:1201.0491v1}, 2012.

\bibitem{Sturmfelsetal2012}
Alexander Engstr\"{o}m, Patricia Hersh, and Bernd Sturmfels.
\newblock Toric cubes.
\newblock {\em arXiv:1202.4333v1 [math.CO]}, 2012.

\end{thebibliography}
\end{document}